\def\doi#1{\url{http://dx.doi.org/#1}}
\newif\iftrimmarks  \trimmarksfalse \trimmarkstrue
\theoremstyle{theorem}
\newtheorem{proposition}{Proposition}
\newtheorem{lemma}{Lemma}
\theoremstyle{definition}
\newtheorem*{remark}{Remark}
\begin{document}

\title{An Interpolation Approach to $\zeta(2n)$.\\ A longer version for students}

\author{Samuel G. Moreno, Esther M. Garc\'{\i}a--Caballero\\{\small{\it Departamento de Matem\'aticas,
Universidad de Ja\'en}}\\{\small{\it 23071 Ja\'en, Spain}}}

\date{}

\maketitle

\begin{abstract}

We solve an interpolation problem for computing $\zeta(2n)$ in a rather elementary way, by generalizing the main idea in \cite{SE}.

\vspace{.5cm}

\noindent {\it 2010 Mathematics Subject Classification}: Primary 40A25; Secondary 11M99.

\noindent  {\it Keywords}: Riemann Zeta function; Lidstone interpolation; Summation of series

\end{abstract}

 \section{Starting point and main idea}\label{s01}

We begin by recalling a well-known trigonometric identity, also called the {\it Dirichlet kernel formula}, which reads \cite[p.294]{Da}
\begin{eqnarray}\label{e1005}
\frac{1}{2}+\sum_{k=1}^{m} \cos (k x)=\frac{\sin \big((m+1/2)x\big)}{2 \sin (x/2)},\qquad (m=1,2,\ldots).
\end{eqnarray}
\noindent Formula (\ref{e1005}) can be easily proved by first multiplying $\cos (kx)$  by $\sin (x/2)$ and then using that $\cos a \sin b= (\sin (a+b)-\sin (a-b))/2$; in closing, the magic touch: sum up from  $k=1$ to $k=m$ and \ldots  enjoy telescoping!

Now we come to the steps in which the computation of $\zeta(2n)$ will be developed:
\begin{itemize}
  \item Multiplying (\ref{e1005}) by an algebraic polynomial $q$, and then integrating over $[0,\pi]$, we obtain
  \begin{eqnarray*}\label{e1010}
 \sum_{k=1}^{m}\int_0^{\pi}q(x) \cos (k x)\,dx&=&-\frac{1}{2}\int_0^{\pi}q(x) \,dx+\int_0^{\pi}q(x)\frac{\sin \big((m+1/2)x\big)}{2 \sin (x/2)}\,dx.
  \end{eqnarray*}
  \item For each positive integer $n$, the polynomial $q$ is chosen to get $\int_0^{\pi}q(x) \cos (k x)\,dx=1/k^{2n}$, which yields
  \begin{eqnarray}\label{e1015}
 \sum_{k=1}^{m}\frac{1}{k^{2n}}&=&-\frac{1}{2}\int_0^{\pi}q(x) \,dx+\int_0^{\pi}q(x)\frac{\sin \big((m+1/2)x\big)}{2 \sin (x/2)}\,dx.
  \end{eqnarray}
  \item Finally, after doing the computations (at the right-hand side of (\ref{e1015})) and taking limits as $m\to \infty$, our hope is to find a closed form for $\zeta(2n)$ by means of
 \begin{eqnarray}\label{e1020}
 \zeta(2n)&=&\sum_{k=1}^{\infty}\frac{1}{k^{2n}}\nonumber\\&=&-\frac{1}{2}\int_0^{\pi}q(x) \,dx+\lim_{m\to \infty}\left(\int_0^{\pi}q(x)\frac{\sin \big((m+1/2)x\big)}{2 \sin (x/2)}\,dx\right).
  \end{eqnarray}
\end{itemize}

Well, our plan is music to our ears provided that we can find a polynomial $q$ such that $\int_0^{\pi}q(x) \cos (k x)\,dx=1/k^{2n}$, and provided also that we can obtain the above limit. These two issues constitute the target of the next two sections.

\section{First obstacle: Does any polynomial matching our plan exist?}\label{s02}

\subsection{A notoriously simple formula}

Let us first examine how  $\int_0^{\pi}q(x) \cos (k x)\,dx$ can be calculated and its value expressed in terms of the derivatives $q^{(2j-1)}(\pi)$ and $q^{(2j-1)}(0)$.
\begin{proposition}\label{p201}
If $k$ is a positive integer and $q$ is a polynomial, then
\begin{eqnarray}\label{e2005}
\int_0^{\pi} q(x) \cos kx \,dx=\sum_{j=1}^{\infty}(-1)^{j+1}\frac{(-1)^kq^{(2j-1)}(\pi)-q^{(2j-1)}(0)}{k^{2j}}.
\end{eqnarray}
\noindent (Notice that the above sum always terminates, since $q$ is a polynomial and thus $q^{(2j-1)}=0$ for all $j$ such that $2j-1>{\rm deg} (q)$.)
\end{proposition}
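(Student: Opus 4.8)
The plan is to establish the formula by repeated integration by parts, exploiting the fact that $\cos (kx)$ and $\sin (kx)$ return to themselves (up to sign) after two antiderivatives, while the polynomial $q$ is differentiated twice at each stage and eventually dies.

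First I would integrate $\int_0^{\pi}q(x)\cos (kx)\,dx$ by parts once, taking the antiderivative $\sin (kx)/k$ of $\cos (kx)$. Since $\sin (k\pi)=\sin 0=0$, the boundary term vanishes and we are left with $-\frac{1}{k}\int_0^{\pi}q'(x)\sin (kx)\,dx$. Integrating by parts a second time, with antiderivative $-\cos (kx)/k$ of $\sin (kx)$, and using $\cos (k\pi)=(-1)^k$, $\cos 0=1$, I obtain the reduction
\begin{equation*}
\int_0^{\pi}q(x)\cos (kx)\,dx=\frac{(-1)^kq'(\pi)-q'(0)}{k^2}-\frac{1}{k^2}\int_0^{\pi}q''(x)\cos (kx)\,dx.
\end{equation*}

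The second step is to iterate. Applying the same reduction to $q''$, then to $q^{(4)}$, and so on, after $N$ steps I would arrive at
\begin{equation*}
\int_0^{\pi}q(x)\cos (kx)\,dx=\sum_{j=1}^{N}(-1)^{j+1}\frac{(-1)^kq^{(2j-1)}(\pi)-q^{(2j-1)}(0)}{k^{2j}}+\frac{(-1)^N}{k^{2N}}\int_0^{\pi}q^{(2N)}(x)\cos (kx)\,dx,
\end{equation*}
which is most cleanly proved by induction on $N$, the inductive step being exactly the two-fold integration by parts above, applied to the polynomial $q^{(2N)}$. Choosing $N$ with $2N>\deg q$ makes $q^{(2N)}\equiv 0$, so the remainder integral vanishes and the finite sum coincides with the (terminating) infinite sum in (\ref{e2005}).

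I do not expect a serious obstacle here; the computation is routine. The only points demanding care are the sign bookkeeping through the iteration --- one must check that the factor $(-1)^{j+1}$ and the alternating powers of $-1/k^2$ combine correctly --- and the observation that the two boundary evaluations arising at each stage are precisely $\cos (k\pi)=(-1)^k$ at $x=\pi$ and $\cos 0=1$ at $x=0$, which is what produces the numerator $(-1)^kq^{(2j-1)}(\pi)-q^{(2j-1)}(0)$. The vanishing of the boundary term from the \emph{first} integration by parts (thanks to $\sin (k\pi)=\sin 0 =0$) is what leaves only odd-order derivatives of $q$ in the final formula.
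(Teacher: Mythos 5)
Your proposal is correct and follows essentially the same route as the paper: two integrations by parts yield the reduction formula relating $\int_0^{\pi}q(x)\cos (kx)\,dx$ to the same integral with $q''$, and iterating (the paper does this informally, you make the induction on $N$ and the remainder term explicit) terminates once $2N>\deg q$. The sign bookkeeping in your general formula checks out, so there is nothing to add.
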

\begin{proof} Integrating by parts we have
\begin{eqnarray}\label{e2010}
\int_0^{\pi}q(x) \cos k x\,dx&=&q(x)\frac{\sin k x}{k}\Bigg|_0^{\pi}
-\frac{1}{k}\int_0^{\pi}q'(x)\sin k x \,dx \nonumber\\&=&-\frac{1}{k}\int_0^{\pi}q'(x)\sin k x \,dx\nonumber\\
&=&\frac{1}{k}\left(q'(x)\frac{\cos k x}{k}\Bigg|_0^{\pi}-\frac{1}{k}\int_0^{\pi}q''(x)\cos k x \,dx\right)\nonumber\\
&=&\frac{(-1)^k q'(\pi)-q'(0)}{k^2}-\frac{1}{k^2}\int_0^{\pi}q''(x)\cos k x \,dx.
\end{eqnarray}
\noindent Hence, by iteration, with $q$ replaced by $q''$ in the above formula, we get
\begin{eqnarray*}\label{e2015}
\int_0^{\pi}q(x)\cos k x\,dx&=&\frac{(-1)^k q'(\pi)-q'(0)}{k^2}\nonumber\\&-&\frac{1}{k^2}\left(\frac{(-1)^k q^{(3)}(\pi)-q^{(3)}(0)}{k^2}-\frac{1}{k^2}\int_0^{\pi}q^{(4)}(x)\cos k x \,dx\right).
\end{eqnarray*}
\noindent A new step in our deduction is obtained by following the same procedure, now repla\-cing $q$ by $q^{(4)}$ in (\ref{e2010}) to obtain
\begin{eqnarray*}\label{e2020}
\int_0^{\pi}q(x)\cos k x\,dx&=&\frac{(-1)^k q'(\pi)-q'(0)}{k^2}-\frac{(-1)^k q^{(3)}(\pi)-q^{(3)}(0)}{k^4}\\&+&\frac{1}{k^4}\left(\frac{(-1)^k q^{(5)}(\pi)-q^{(5)}(0)}{k^2}-\frac{1}{k^2}\int_0^{\pi}q^{(6)}(x)\cos k x \,dx\right)\\&=&\frac{(-1)^k q'(\pi)-q'(0)}{k^2}-\frac{(-1)^k q^{(3)}(\pi)-q^{(3)}(0)}{k^4}\\& &+\frac{(-1)^k q^{(5)}(\pi)-q^{(5)}(0)}{k^6}-\frac{1}{k^6}\int_0^{\pi}q^{(6)}(x)\cos k x \,dx.
\end{eqnarray*}
\noindent Following in the same manner (with a formal inductive argument if desired), the proof is complete.
\end{proof}

\subsection{The pleasant consequence}

In view of Proposition \ref{p201} we deduce that for each positive integer $n$, if a polynomial $P_{2n}$ of exact degree $2n$ exists such that  $P'_{2n}(0)=P'''_{2n}(0)=\cdots=P_{2n}^{(2n-3)}(0)=0$, also $P_{2n}^{(2n-1)}(0)=(-1)^n$, and $P'_{2n}(\pi)=P'''_{2n}(\pi)=\cdots=P_{2n}^{(2n-1)}(\pi)=0$, then (\ref{e2005}) simplifies to
\begin{eqnarray}\label{e2025}
\int_0^{\pi} P_{2n}(x) \cos k x \,dx=\frac{1}{k^{2n}},\qquad (k=1,2,\ldots),
\end{eqnarray}
\noindent and this is, exactly, what we were looking for.

\subsection{Intermezzo. A journey to Lidstoneland}

Summarizing, for a fixed positive integer $n$, we are thus led to find a polynomial $P_{2n}$ such that  $P^{(2j-1)}(0)=(-1)^n\delta_{j\,n}$ and $P^{(2j-1)}(\pi)=0$ for $1\leq j\leq n$. (We have used the {\it Kronecker delta} $\delta_{j\,n}$ which equals 0 if $j\neq n$ and which satisfies that $\delta_{n\,n}=1$.) This situation reminds us the celebrated problem of polynomial interpolation of finding the unique algebraic polynomial  $Q_n$,  of degree at most $n$, for which $Q_n(x_i)=y_i$, where the $x_i$s and the $y_i$s are $n+1$ given values with $x_i\neq x_j$ for $i\neq j$. But we can find many more interpolation problems that admit unique solution. For example, the so-called {\it Taylor interpolation}, which consists in finding the unique polynomial $Q_n$ with ${\rm deg} (Q_n)\leq n$ for which $Q_n^{(j)}(x_0)=y_j$, where $0\leq j\leq n$, and  where the point $x_0$ and the $y_j$s are given numbers. If conditions on the consecutive derivatives are given not just for a single point $x_0$ , but for two points $x_0$, $x_1$, we will be facing the so-called  {\it two point Taylor interpolation}. This is again a well posed interpolation problem with a single solution. We refer the reader to the very classic source (and still an excellent one) \cite[Ch. I]{Da}, in which the interpolation problem is fully and deeply analyzed, and where many more examples can be found.

A not so popular (but nevertheless very well studied) interpolation situation emerges when the conditions are given for two points but, instead of the consecutive derivatives, now the given derivatives are just the even ones. This problem was originally posed and solved by Lidstone in \cite{L}, and extended by many other authors (see, for example, \cite{W}). In the remains of this intermezzo we show the very basic facts on Lidstone interpolation that we will need in order to overcome our first obstacle. The interested reader can verify each one of the claims without any further reading.

The unique polynomial $p$ of degree at most $2n-1$ such that $p^{(2j)}(0)=a_{j}$ and $p^{(2j)}(1)=b_{j}$ for $0\leq j\leq n-1$ is
\begin{eqnarray}\label{2030}
p(x) &=&  \sum_{j=0}^{n-1}\left(a_{j}\Lambda_{j}(1-x)+b_{j}\Lambda_{j}(x)\right),
\end{eqnarray}
\noindent where the so-called {\it Lidstone polynomials} $\Lambda_{k}$ are recursively defined by
\begin{alignat*}{1}\label{2035}
\Lambda_{0}(x)&=x,\\
\Lambda_{k}''(x)&=\Lambda_{k-1}(x),\\
\Lambda_{k}(0)&=\Lambda_{k}(1)=0,\qquad (k=1,2,\ldots).
\end{alignat*}

Define $e_{k}(x)=x^k$. Since $e_{2k+1}^{(2j)}(0)=0$ and $e_{2k+1}^{(2j)}(1)=\displaystyle{\frac{(2k+1)!}{(2k+1-2j)!}}$  for  $0\leq j\leq k$, then
\begin{eqnarray*}\label{2040}
x^{2k+1} &=& \sum_{j=0}^{k} \frac{(2k+1)!}{(2k+1-2j)!}\Lambda_{j}(x), \qquad (k=0,1,\ldots),
\end{eqnarray*}
\noindent which yields the recursion
\begin{eqnarray}\label{2045}
\Lambda_{k}(x)=\frac{x^{2k+1}}{(2k+1)!}- \sum_{j=0}^{k-1} \frac{\Lambda_{j}(x)}{(2k+1-2j)!}.
\end{eqnarray}
\noindent Well, that is what we need. No hard machinery after all. Now it's time to meet the polynomials $P_{2n}$ that fulfill (\ref{e2025}).

\subsection{Passing the first challenge with flying colors} For each $n=1,2,\ldots$, our task is to find a polynomial $P_{2n}$ such that
\begin{eqnarray*}
P_{2n}^{(2j-1)}(0)=(-1)^n\delta_{j\,n}\qquad \mbox{ and }\qquad P_{2n}^{(2j-1)}(\pi)=0,\qquad \mbox{ for }\qquad j=1,\ldots,n.
\end{eqnarray*}
\noindent But such a polynomial exists. Define $Q_n(x)=P_{2n}'(\pi x)$ and note that conditions above transform to
\begin{eqnarray*}\label{e2050}
Q_n^{(2j)}(0) &=& \pi^{2j}P_{2n}^{(2j+1)}(0)=\pi^{2j}P_{2n}^{(2(j+1)-1)}(0)=(-1)^n\pi^{2j}\delta_{j+1\,n},\\
Q_n^{(2j)}(1)&=& \pi^{2j}P_{2n}^{(2j+1)}(\pi)=\pi^{2j}P_{2n}^{(2(j+1)-1)}(\pi)=0, \qquad  \qquad (1\leq j+1\leq n).
\end{eqnarray*}
\noindent Hence, by (\ref{2030}) we deduce
\begin{eqnarray*}
 Q_n(x)=(-1)^n\pi^{2n-2}\Lambda_{n-1}(1-x).
\end{eqnarray*}

Let us mention an important detail. Since $2n$ conditions are the ones for $P_{2n}$ to satisfy, the degree of $P_{2n}$ is one unit more than required. In other words, if we insist to look for a polynomial of precise degree $2n$, then it is still possible to add one more condition on $P_{2n}$.  Just for the sake of simplicity, let us impose $P_{2n}(0)=0$. This choice yields
\begin{eqnarray}\label{e2055}
P_{2n}(x)&=&\int_{0}^{x} Q_n\left(\frac{t}{\pi}\right) \,dt=(-1)^n\pi^{2n-2}\int_{0}^{x} \Lambda_{n-1}\left(1-\frac{t}{\pi}\right) \,dt\nonumber\\&=&(-1)^n\pi^{2n-1}\left(\Lambda_{n}'(1)-\Lambda_{n}'\left(1-\frac{x}{\pi}\right)\right),
\end{eqnarray}
\noindent and thus
\begin{eqnarray}\label{e2060}
\int_{0}^{\pi} P_{2n}(x)\,dx=(-1)^n\pi^{2n}\Lambda_{n}'(1).
\end{eqnarray}

Finally notice that since $\Lambda_{n}$ is a polynomial of precise degree $2n+1$ (see (\ref{2045})), then we deduce from (\ref{e2055}) that ${\rm deg} P_{2n}=2n$.

\section{Second obstacle: How to compute the limit of an integral without computing the integral}\label{s03}

\noindent As mentioned in Section \ref{s01}, the success of our plan strongly depends on our ability to handle the sum
\begin{eqnarray}\label{e3005}
-\frac{1}{2}\int_0^{\pi}P_{2n}(x) \,dx+\lim_{m\to \infty}\left(\int_0^{\pi}P_{2n}(x)\frac{\sin \big((m+1/2)x\big)}{2 \sin (x/2)}\,dx\right).
\end{eqnarray}
\noindent (This is the right-hand side of formula (\ref{e1020}), with $q$ replaced by $P_{2n}$.) Notice that we have just calculated the first term in the above sum (it  equals minus half the value in (\ref{e2060})), so now we have to consider only the second term. But we must emphasize that our concern is not to compute
\begin{eqnarray*}\label{e3010}
\int_{0}^{\pi} \frac{P_{2n}(x)}{2 \sin (x/2)}\sin ((m+1/2) x)\,dx,
\end{eqnarray*}
\noindent  but its limit as $m\to \infty$. To this end, we have at hand a result that fits to a T:
\begin{lemma}[Riemann-Lebesgue] (see \cite[p. 296]{Da})
If $f$ is a function with continuous derivative on $[0,\pi]$, then
\begin{eqnarray*}\label{e3015}
\lim_{t\to \infty}\left(\int_0^{\pi}f(x) \cos (t x)\,dx\right)=\lim_{t\to \infty}\left(\int_0^{\pi}f(x) \sin (t x)\,dx\right)=0.
\end{eqnarray*}
\end{lemma}

The proof of the above ``easy version'' of the Riemann-Lebesgue lemma is nothing but integrating by parts. Let us show how this famous results works to compute the limit in (\ref{e3005}). The trick consists in writing $P_{2n}(x)=x R_{2n-1}(x)$ ($R_{2n-1}$ being a polynomial of degree $2n-1$),  which is possible since by (\ref{e2055}) we have that $P_{2n}(0)=0$.  This factorization yields
\begin{eqnarray*}\label{e3020}
\frac{P_{2n}(x)}{2 \sin (x/2)}=\frac{x/2}{\sin(x/2)} R_{2n-1}(x),
\end{eqnarray*}
\noindent and the continuity of $P_{2n}(x)/ (2\sin (x/2))$ and its derivative on $[0,\pi]$ follows readily. Therefore,
\begin{eqnarray}\label{e3025}
\lim_{m\to \infty}\left(\int_0^{\pi}P_{2n}(x)\frac{\sin \big((m+1/2)x\big)}{2 \sin (x/2)}\,dx\right)&=&0.
\end{eqnarray}

\section{Epilogue}\label{s04}

The good news is that all the workings have already been done. Indeed, substituting (\ref{e2060}) and (\ref{e3025}) into (\ref{e1020}), and taking into account (\ref{2045}), we  conclude that

\begin{eqnarray}\label{e4005}
\zeta(2n)=\sum_{k=1}^{\infty} \frac{1}{k^{2n}}&=&(-1)^{n+1}\frac{\pi^{2n}\Lambda_{n}'(1)}{2},\qquad (n=1,2,\ldots),
\end{eqnarray}
\noindent where  $\Lambda_{0}'(1)=1$ and $\Lambda_{n}'(1)=\displaystyle{\frac{1}{(2n)!}- \sum_{j=0}^{n-1} \frac{\Lambda_{j}'(1)}{(2n+1-2j)!}}$. The first few values of $\Lambda_{n}'(1)$, $n=1,\ldots,5$ are $1/3,-1/45,2/945,-1/4725,2/93555$. One can go on computing them as long as patience will permit.

\begin{remark}{}
By changing the conditions on $P_{2n}$, the same problem of summing up $\sum_{k=1}^{\infty} (1/k^{2n})$ could have been solved in a different way. For example, the conditions
$P_{2n}(0)=0$, $P_{2n}^{(2j-1)}(0)= P_{2n}^{(2j-1)}(\pi)=0$ for $1\leq j\leq n-1$, $P_{2n}^{(2n-1)}(\pi)=0$ and $P_{2n}^{(2n)}(0)=(2n)!$ define uniquely the polynomials $P_{2n}$ by means of
\begin{eqnarray*}
 P_{2n}(x)=\sum_{k=0}^{2n-1}\binom{2n}{k} (2\pi)^k B_k\, x^{2n-k},
\end{eqnarray*}
\noindent which are intimately related with Bernoulli polynomials (the $B_k$ are Bernoulli numbers as described in \cite{AS}). In some sense, this resembles the approach in \cite{C} to compute $\zeta(2n)$.

In much a similar way, Euler polynomials could have appeared. In fact, this is the approach in \cite{D}.

In closing, let us mention that formula (4.6) in \cite{W} has some resemblance with our (\ref{e4005}) (we came across with \cite{W} after the submission of this note). However, it is worth mentioning that our method is much more elementary, and also that we give the key to recursively compute $\zeta(2n)$, by relating $\Lambda_{n}'(1)$ with  $\Lambda_{j}'(1)$ for $j=0,1,\ldots,n-1$.
\end{remark}

\vfill\eject

\end{document}